\documentclass[sn-mathphys,Numbered]{sn-jnl}

\usepackage{graphicx}
\usepackage{multirow}
\usepackage{amsmath,amssymb,amsfonts}
\usepackage{amsthm}
\usepackage{mathrsfs}
\usepackage[title]{appendix}
\usepackage{xcolor}
\usepackage{textcomp}
\usepackage{manyfoot}
\usepackage{booktabs}
\usepackage{algorithm}
\usepackage{algorithmicx}
\usepackage{algpseudocode}
\usepackage{listings}
\usepackage[latin1]{inputenc}
\usepackage{makecell}
\usepackage{tabularray}
\usepackage{tikz-cd}
\usepackage{mathbbol}
\usepackage{float}
\usepackage{caption}

\SetTblrInner{rowsep=2pt}

\theoremstyle{thmstyleone}
\newtheorem{theorem}{Theorem}[section]
\newtheorem{proposition}[theorem]{Proposition}
\newtheorem{lemma}[theorem]{Lemma}
\newtheorem{corollary}[theorem]{Corollary}

\theoremstyle{thmstyletwo}

\newtheorem{remark}[theorem]{Remark}
\newtheorem{conjecture}[theorem]{Conjecture}

\theoremstyle{thmstylethree}
\newtheorem{definition}[theorem]{Definition}

\everymath{\displaystyle}

\begin{document}

\title[On the Density of Polynomial Mappings Satisfying the Jacobian Conjecture]{On the Density of Polynomial Mappings Satisfying the Jacobian Conjecture}

\author*[1]{\fnm{João Vítor} \sur{Pissolato}}\email{jvpissolato@usp.br}

\affil*[1]{\orgdiv{Department of Mathematics}, \orgname{Instituto de Ciências Matemáticas e de Computação}, \orgaddress{\street{Trabalhador São-Carlense Avenue, 400}, \city{São Carlos}, \postcode{13566-590}, \state{São Paulo}, \country{Brazil}}}

\abstract{The Jacobian Conjecture is a known unsolved problem and it is the problem number 16 of the list ``Mathematical Problems for the Next Century'', made by Stephen Smale, in 1998. The problem asks whether or not the Jacobian matrix of a polynomial mapping $F:\mathbb{C}^n\to\mathbb{C}^n$ at every point being invertible implies that $F$ is an automorphism. The case $n=1$ is trivially true, while the case $n\geq3$ has been recently proven to be false by a counter-example provided by Levent Alpöge, and the case $n=2$ is still an open problem. In this paper, we show that, for all $n\geq1$, there exists a non-empty Zariski dense open set $U$ such that, for all $F\in U$, if the Jacobian matrix of $F$ is invertible, then $F$ is an automorpshim.}

\keywords{Jacobian Conjecture, Proper Polynomial Mappings, Automorphisms}

\maketitle	

\section{Introduction}\label{sec1} The Jacobian Conjecture, formulated in full generality for $n$ variables by Ott-Heinrich Keller, in 1939, is a famous open problem. The statement of this conjecture is the following:

\begin{conjecture}
	
	Let $n\geq1$ be a given integer and $F:\mathbb{C}^n\to\mathbb{C}^n$ be a complex polynomial mapping in $n$ variables. If the determinant of its Jacobian matrix is nowhere zero, then $F$ has an inverse that is also a polynomial mapping.
	
\end{conjecture}

The problem for the case $n=2$ has been stated by Ludwig Kraus, in 1884. More details can be seen in \cite{4}.

The case for functions of one variable, that is, for $n=1$, is not difficult to prove to be true. Indeed, if $F:\mathbb{C}\to\mathbb{C}$ is a polynomial mapping $$F(x)=a_nx^n+\cdots+a_2x^2+a_1x+a_0$$ such that $$F'(x)=na_nx^{n-1}+\cdots+2a_2x+a_1$$ is a complex polynomial that has no complex roots, it follows from the Fundamental Theorem of Algebra that $F'(x)$ must be the constant polynomial, that is, $$F'(x)=a_1\neq0 \text{ and }a_i=0, \text{ for }2\leq i\leq n.$$ Therefore, $F(x)=a_1x+a_0$, and such $F$ is an automorphism, with its inverse given by the polynomial $$F^{-1}(x)=\frac{1}{a_1}x-\frac{a_0}{a_1}.$$

For the real case, the problem is known as the Strong Real Jacobian Conjecture, and it has been proven to be false by a counter-example shown by Pinchuk. More details can be seen in \cite{5}.

Over the years, many mathematicians researched about this problem, obtaining many reductions and equivalences, which we refer the works made by Bass, Connell and Wright in \cite{4}.

This conjecture has some connections with the properness property of the mapping. In this paper, we investigate some properties of complex polynomial mappings and prove that a sufficiently generic polynomial mapping is proper. This allows us to conclude that, for such generic polynomials, the Jacobian conjecture is true, which is our main result, stated as follows:

\begin{theorem}
	
	Let $n\geq1$ and $d_1,\ldots,d_n\geq1$ be given integers. Then, there exists a non-empty dense Zariski open set $U\subseteq \Omega_n(d_1,\ldots,d_n)$ such that, for every $F\in U$, if the Jacobian determinant of $F$ is invertible, then $F$ is an automorphism.
	
\end{theorem}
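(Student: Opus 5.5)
The plan is to show that a generic polynomial mapping is proper, and then use the classical fact that a proper polynomial map with nowhere-vanishing Jacobian determinant is an automorphism. Here $\Omega_n(d_1,\ldots,d_n)$ is the affine space of mappings $F=(F_1,\ldots,F_n)$ with $\deg F_i\le d_i$, with the coefficients as coordinates.

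\textbf{The open set.} Write $F_i^{h}$ for the homogeneous component of degree exactly $d_i$ of $F_i$. Take $U$ to be the set of $F$ for which the leading forms $F_1^{h},\ldots,F_n^{h}$ have no common zero in $\mathbb{C}^n\setminus\{0\}$. Equivalently, they have no common zero in $\mathbb{P}^{n-1}$.
\begin{itemize}
\item \emph{Openness.} The resultant $\mathrm{Res}(F_1^{h},\ldots,F_n^{h})$ is a polynomial in the coefficients of $F$. It vanishes exactly when the forms have a common nontrivial zero. So $U$ is the complement of the hypersurface $\{\mathrm{Res}=0\}$ and is Zariski open.
\item \emph{Non-emptiness.} The map $F_i=x_i^{d_i}$ has leading forms $x_1^{d_1},\ldots,x_n^{d_n}$, whose only common zero is $0$, so it lies in $U$.
\item \emph{Density.} $\Omega_n$ is irreducible, being an affine space. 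Hence any non-empty Zariski open subset of it is dense.
\end{itemize}
If the paper states the resultant facts without proof, I would cite them. Otherwise I would argue through elimination theory: the incidence set $\{(F,[x]) : F^{h}(x)=0\}\subseteq \Omega_n\times\mathbb{P}^{n-1}$ is closed, and its projection to $\Omega_n$ is closed because $\mathbb{P}^{n-1}$ is complete.

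\textbf{Properness on $U$.} Fix $F\in U$. The function $m(x)=\max_i |F_i^{h}(x)|^{1/d_i}$ is continuous and positive on the unit sphere $\{|x|=1\}$, so it has a positive minimum $c$ there. By homogeneity, $m(x)\ge c|x|$ for all $x$. The lower-order terms satisfy $|F_i(x)-F_i^{h}(x)|=O(|x|^{d_i-1})$. Therefore, for $|x|$ large, some $i$ satisfies $|F_i(x)|\ge \tfrac12 c^{d_i}|x|^{d_i}$, and so $|F(x)|\to\infty$ as $|x|\to\infty$. This means preimages of bounded sets are bounded, so $F$ is proper.

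An algebraic version of the same conclusion: $\mathbb{C}[x]$ is a finite module over $\mathbb{C}[F_1,\ldots,F_n]$. This holds because $F_1^{h},\ldots,F_n^{h}$ form a homogeneous system of parameters, so the $x_j$ are integral over $\mathbb{C}[F]$ via a graded Nakayama-type argument.

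\textbf{Conclusion.} Suppose also that $\det JF$ is nowhere zero.
\begin{itemize}
\item $F$ is a local biholomorphism and proper, so it is a finite covering map $\mathbb{C}^n\to\mathbb{C}^n$. Since $\mathbb{C}^n$ is simply connected, the covering is trivial and $F$ is bijective.
\item A bijective polynomial self-map of $\mathbb{C}^n$ has a polynomial inverse. This is standard: by Zariski's Main Theorem, or because a bijective morphism onto the normal variety $\mathbb{C}^n$ is birational in characteristic $0$ and finite, hence an isomorphism. I would cite this from \cite{4}.
\end{itemize}
Hence $F$ is an automorphism.

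\textbf{Main obstacle.} I expect the delicate step to be the Zariski-openness of $U$, that is, the resultant or elimination-theory argument. The step from "proper étale" to "polynomial inverse" is also subtle, but I would quote it rather than reprove it.
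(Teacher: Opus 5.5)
Your proof is correct. It has the same architecture as the paper's: generic leading forms have no common nontrivial zero, so $F$ is proper, so $F$ is an automorphism by the Bass--Connell--Wright equivalence. The genericity step, however, is proved differently.

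\textbf{Genericity.} The paper works in $H_n(d_1,\ldots,d_n)$ with the affine incidence variety $X=\{(p,F): F(p)=0\}\subseteq\mathbb{C}^n\times H_n$.
It computes $\dim X=\dim H_n$ from the fibres over $p\neq0$, each isomorphic to a codimension-$n$ linear subspace. It then applies the fibre-dimension theorem to $X\to H_n$ to get zero-dimensional generic fibres, rules out nontrivial zeros by the scaling $p\mapsto\lambda p$, and pulls the open set back to $\Omega_n$ along the leading-part projection.
You instead define $U$ directly as the complement of the resultant hypersurface, or equivalently of the image of a closed subset of $\Omega_n\times\mathbb{P}^{n-1}$ under a projection that is closed because $\mathbb{P}^{n-1}$ is complete. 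You get non-emptiness from the explicit witness $(x_1^{d_1},\ldots,x_n^{d_n})$.
Your route gives the largest possible such set and avoids the paper's dimension bookkeeping. That bookkeeping needs some care, since $X$ is reducible: it contains the component $\{0\}\times H_n$. The paper's route, in exchange, needs no projective elimination theory.

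\textbf{Properness.} Your estimate via $\max_i|F_i^{h}(x)|^{1/d_i}\ge c|x|$ supplies something the paper leaves unjustified. Lemma~\ref{lemma3} simply asserts that properness of the leading part passes to $F$. When the $d_i$ differ, the bound $\|F^h(x)\|\ge\alpha\|x\|^{\min_i d_i}$ from Proposition~\ref{prop1} does not by itself dominate lower-order terms of degree up to $\max_i d_i-1$. For example, take $F=(x,\,y^3-x^2)$ on the curve $y^3=x^2$. Your weighting by $1/d_i$ is exactly what makes this step rigorous.

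\textbf{Conclusion step.} Where the paper cites \cite[Theorem 2.1]{4} directly, you unpack it as: proper local biholomorphism, hence a covering, hence a bijection, hence a polynomial inverse. That is fine.
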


\section{Basic concepts and results about polynomial mappings}\label{sec2}

In this section, we give the basic concepts and fix some notations that we use throughout this work. We follow the notations introduced by Farnik, Jelonek and Ruas in \cite{1}.

\begin{definition}
	
	Given $n\geq 1$ and integers $d_1,\ldots,d_n\geq1$, we define the \textbf{space of polynomial mappings $F:\mathbb{C}^n\to\mathbb{C}^n$ of bounded degrees $(d_1,\ldots,d_n)$}, denoted by $\Omega_n(d_1,\ldots,d_n)$, as the set consisting of all mappings $F:\mathbb{C}^n\to\mathbb{C}^n$ such that $F=(f_1,\ldots,f_n)$, where each $f_i$ is a polynomial in $n$ variables and has degree less than or equal to $d_i$.
	
\end{definition}

\begin{remark}
	
	If we fix an ordering for the monomials in $n$ variables and degrees less than or equal to $d_i$, for $1\leq i\leq n$, we can identify $\Omega_n(d_1,\ldots,d_n)$ as a complex affine space $\mathbb{C}^N$, for some appropriate $N$. This identification is made by considering the coefficients of each $f_i$ to form a $N$-tuple in $\mathbb{C}^N$. Therefore, we can equip the set $\Omega_n(d_1,\ldots,d_n)$ with the Zariski Topology, where a closed set is given by the common zeroes of polynomial expressions of the above coefficients.
	
\end{remark}

\begin{definition}
	
	We denote by $H_n(d_1,\ldots,d_n)$ the subspace of $\Omega_n(d_1,\ldots,d_n)$ consisting of polynomial mappings $F:\mathbb{C}^n\to\mathbb{C}^n$, where $F=(f_1,\ldots,f_n)$ and each $f_i$ is an homogeneous polynomial of degree $d_i$ in $n$ variables.
	
\end{definition}

\begin{remark}
	
	Given $F\in H_n(d_1,\ldots,d_n)$, with $F=(f_1,\ldots,f_n)$, we can write each $f_k$ as \begin{equation}\label{exp1}f_k=\sum_{i_2,i_3,\ldots,i_n}a_{i_2,i_3,\ldots,i_n;k}x_1^{d_k-i_2-\cdots-i_n}x_2^{d_2}x_3^{d_3}\cdots x_n^{d_n},\end{equation} for each $k=1,2,\ldots,n$.
	
\end{remark}

In order to prove our results, we will make use of the known Theorem of the Dimension of the Fibres, which we state below for reference and more details can be found in \cite[4.4 Theorem on the Dimension of the Fibres, p. 228]{2}.

\begin{theorem}[Dimension of the Fibres]\label{tdf}
	
	Let $X$ and $Y$ be two algebraic sets, and let $f:X\to Y$ be a dominant morphism. Then, for all $x\in X$, we have $$\textnormal{dim}(f^{-1}(f(x)))\geq\textnormal{dim}(X)-\textnormal{dim}(Y).$$ Furthermore, there exists a non-empty dense Zariski open set $U\subseteq Y$ such that, for every $y\in U$, the following equality holds: $$\textnormal{dim}(f^{-1}(y))=\textnormal{dim}(X)-\textnormal{dim}(Y).$$
	
\end{theorem}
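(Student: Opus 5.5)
The plan is to prove the statement under the standing convention of \cite{2} that $X$ and $Y$ are \emph{irreducible} quasi-projective varieties. For reducible algebraic sets the inequality can fail on a component of small dimension, so one applies the theorem componentwise. Since dimension is local and $f^{-1}(y)\cap V$ is a fibre of $f|_V$ for any open $V\subseteq X$, we may shrink $X$ and $Y$ freely. So we assume throughout that $Y$ and $X$ are affine and irreducible, with $\dim X=n$ and $\dim Y=m$. The two assertions are proved separately.

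\textbf{The lower bound.} Let $y=f(x)$. By the standard existence of a system of parameters, there are regular functions $g_1,\ldots,g_m$ on an affine neighbourhood $W$ of $y$ such that $y$ is an isolated point of $V(g_1,\ldots,g_m)\subseteq W$. We obtain them by induction on $m$: choose each $g_i$ vanishing at $y$ but not identically on any component through $y$ of the previous zero set, so that each cut drops dimension by exactly one. Shrinking $W$, we get $V(g_1,\ldots,g_m)=\{y\}$. Pulling back gives
$$f^{-1}(y)\cap f^{-1}(W)=V(g_1\circ f,\ldots,g_m\circ f).$$
The right-hand side is cut out of the $n$-dimensional variety $f^{-1}(W)$ by $m$ equations. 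By the affine dimension theorem (Krull's principal ideal theorem, applied $m$ times), every irreducible component of it has dimension $\geq n-m$. Hence every component of $f^{-1}(f(x))$ passing through $x$ has dimension at least $n-m$.

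\textbf{The generic equality.} Dominance gives an inclusion $\mathbb{C}[Y]\hookrightarrow\mathbb{C}[X]$ and hence $\mathbb{C}(Y)\subseteq\mathbb{C}(X)$, with transcendence degree $r=n-m$.
\begin{enumerate}
\item Write $\mathbb{C}[X]=\mathbb{C}[Y][u_1,\ldots,u_N]$ and reorder so that $u_1,\ldots,u_r$ form a transcendence basis of $\mathbb{C}(X)$ over $\mathbb{C}(Y)$.
\item Each $u_j$ with $j>r$ satisfies a nonzero polynomial relation over $\mathbb{C}[Y][u_1,\ldots,u_r]$. Its leading coefficient in $u_j$ is a polynomial in $u_1,\ldots,u_r$ over $\mathbb{C}[Y]$.
\item Take one nonzero coefficient $a_j\in\mathbb{C}[Y]$ of that leading coefficient, and set $a=\prod_j a_j$. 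By a Noether-normalization-type argument (possibly after a linear change of the $u_1,\ldots,u_r$ to make the leading coefficients monic over $\mathbb{C}[Y]_a$), $\mathbb{C}[X]_a$ becomes integral over $\mathbb{C}[Y]_a[u_1,\ldots,u_r]$.
\end{enumerate}
Then $U=D(a)\subseteq Y$ is a nonempty open set, hence dense. Over $U$, $f$ factors as a finite map $f^{-1}(U)\to U\times\mathbb{A}^r$ followed by the projection. Finite maps have finite fibres, so for $y\in U$ the fibre $f^{-1}(y)$ maps finitely to $\{y\}\times\mathbb{A}^r$. Therefore $\dim f^{-1}(y)\leq r$. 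Combined with the lower bound, this gives equality for $y\in U\cap f(X)$.

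It remains to check that such fibres are nonempty. Since $f(X)$ is dense and constructible, it contains a nonempty open set; alternatively, integrality and lying-over show that $f^{-1}(U)\to U\times\mathbb{A}^r$ is surjective. Intersecting $U$ with this open set gives the required set.

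\textbf{Main obstacle.} The delicate step is step 3: arranging integrality after inverting a single element of $\mathbb{C}[Y]$. The leading coefficients live in $\mathbb{C}[Y][u_1,\ldots,u_r]$, not in $\mathbb{C}[Y]$. I would handle this by first performing Noether normalization over the field $\mathbb{C}(Y)$, which makes the generic fibre finite over $\mathbb{A}^r_{\mathbb{C}(Y)}$. I would then clear the finitely many denominators that appear, which all lie in $\mathbb{C}(Y)$, by localizing at their product $a$.
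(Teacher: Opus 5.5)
The paper gives no proof of this theorem (it is quoted from a textbook), so there is no argument of the paper's own to compare with. Judged on its merits, your argument is a correct proof for irreducible $X$ and $Y$.

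You are right that irreducibility is needed. For arbitrary algebraic sets the statement as written fails: take $X=V(sw-1)\cup\{0\}\subseteq\mathbb{C}^3$ and $f(s,t,w)=s$. This map is onto $\mathbb{C}$, but $f^{-1}(0)$ is a single point while $\dim X-\dim Y=1$. The caveat matters in this paper. The incidence variety in the proof of Lemma~\ref{lemma1} has the irreducible component $\{0\}\times H_n(d_1,\ldots,d_n)$ besides the closure of its part over $\mathbb{C}^n\setminus\{0\}$. So the theorem has to be applied there componentwise, as you propose.

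Your lower bound is the standard argument: pull back a system of parameters at $y$, then apply Krull's principal ideal theorem $m$ times. For the upper bound you use generic Noether normalization. A lighter alternative exists. Any $r+1$ of the generators $u_i$ satisfy a nonzero relation with coefficients in $\mathbb{C}[Y]$. Over the open set where none of these finitely many relations specialises to the zero polynomial, any $r+1$ of the restrictions of the $u_i$ to a component $F$ of the fibre are algebraically dependent. Since these restrictions generate $\mathbb{C}[F]$, this gives $\dim F\le r$. That route needs no normalization, but it must get nonemptiness of fibres from elsewhere (surjectivity of $f$, or Chevalley). Your finite factorization supplies nonemptiness directly through lying-over.

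Two points need tightening.

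\emph{Steps 1--3.} As written they do not yield integrality. Inverting one coefficient of a leading coefficient that lies in $\mathbb{C}[Y][u_1,\ldots,u_r]$ does not make it a unit. No change of $u_1,\ldots,u_r$ among themselves can help either: with $Y$ a point and $\mathbb{C}[X]=\mathbb{C}[u_1,u_1^{-1}]$, the ring is integral over no $\mathbb{C}[\lambda u_1+\mu]$. The argument in your last paragraph is the one to use. Normalize $\mathbb{C}(Y)\otimes_{\mathbb{C}[Y]}\mathbb{C}[X]$ over $\mathbb{C}(Y)$ with new parameters $z_1,\ldots,z_r$, which can be taken to be $\mathbb{C}$-linear combinations of the $u_j$. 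Let $a$ clear the denominators in the resulting monic equations, and factor $f$ through $(f,z_1,\ldots,z_r)$.

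\emph{Shrinking $X$.} This is harmless for the local lower bound but not for the upper bound. A bound on $\dim(f^{-1}(y)\cap X_0)$ says nothing about the part of the fibre outside $X_0$. Instead, cover $f^{-1}(Y_0)$ by finitely many affine opens $X_1,\ldots,X_k$, each still dominant over $Y_0$. Obtain $a_1,\ldots,a_k$ from them and take $U=\bigcap_i D(a_i)$. Nonemptiness of the fibres over $U$ then follows from lying-over applied to any one $X_i$.
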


\section{Main Results}\label{sec3}

We are now in position to state and prove our first result.

\begin{lemma}\label{lemma1}
	
	Let $n\geq1$ and $d_1,\ldots,d_n\geq1$ be given integers. Then, there exists a non-empty dense Zariski open set $U\subseteq H_n(d_1,\ldots,d_n)$ such that, for all $F\in U$, we have $F^{-1}(0)=\{0\}$.
	
\end{lemma}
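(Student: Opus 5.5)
The natural route is an incidence-variety argument combined with the Theorem of the Dimension of the Fibres (Theorem~\ref{tdf}). Since each $f_k$ is homogeneous of degree $d_k\geq1$, we always have $0\in F^{-1}(0)$. Moreover, if $x\neq0$ lies in $F^{-1}(0)$, then the whole line $\mathbb{C}x$ does too. So the condition $F^{-1}(0)=\{0\}$ says exactly that the projective variety $Z(F)=\{[x]\in\mathbb{P}^{n-1}: f_1(x)=\cdots=f_n(x)=0\}$ is empty. This suggests working in $\mathbb{P}^{n-1}$. Consider the incidence set
$$X=\{(F,[x])\in H_n(d_1,\ldots,d_n)\times\mathbb{P}^{n-1} : f_1(x)=\cdots=f_n(x)=0\},$$
which is Zariski closed, because the defining equations are bihomogeneous: linear in the coefficients of $F$ and homogeneous in $x$. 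Let $\pi:X\to H_n(d_1,\ldots,d_n)$ be the first projection. The set of bad $F$, namely those with $F^{-1}(0)\neq\{0\}$, is exactly $\pi(X)$.

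\textbf{Step 1: $\dim X$.} Project instead to $\mathbb{P}^{n-1}$. Fix $[x]$; after a linear change of coordinates we may assume $x=e_1$. The condition $f_k(e_1)=0$ says that the coefficient of $x_1^{d_k}$ in $f_k$ vanishes. These are $n$ independent linear conditions on $H_n(d_1,\ldots,d_n)\cong\mathbb{C}^N$. Hence every fibre of $X\to\mathbb{P}^{n-1}$ is a linear subspace of dimension $N-n$. The map is surjective, and $X$ is a vector bundle over $\mathbb{P}^{n-1}$, hence irreducible. Applying Theorem~\ref{tdf} (or simply counting dimensions for this vector bundle) gives $\dim X=(n-1)+(N-n)=N-1$.

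\textbf{Step 2: $\pi(X)$ is a proper closed subset.} Since $\mathbb{P}^{n-1}$ is complete, $\pi$ is a closed map, so $\pi(X)$ is Zariski closed in $H_n(d_1,\ldots,d_n)$. Its dimension is at most $\dim X=N-1<N$, so $\pi(X)\neq H_n(d_1,\ldots,d_n)$. We could also apply the first inequality of Theorem~\ref{tdf} to $\pi:X\to\overline{\pi(X)}$: the fibres of $\pi$ over bad points are nonempty, so $\dim\overline{\pi(X)}\leq\dim X$. Either way, $U=H_n(d_1,\ldots,d_n)\setminus\pi(X)$ is a non-empty Zariski open set. Since $H_n(d_1,\ldots,d_n)$ is an irreducible affine space, $U$ is automatically dense.

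\textbf{Main obstacle.} The delicate point is closedness of $\pi(X)$. If one prefers to avoid properness of projective space, an alternative is to show nonemptiness and openness directly. For nonemptiness, the map $F_0=(x_1^{d_1},\ldots,x_n^{d_n})$ lies in $U$. For openness, $F^{-1}(0)=\{0\}$ is equivalent to $x_i^{m}\in(f_1,\ldots,f_n)$ for some $m$, by the Nullstellensatz. By Macaulay's theory, this is equivalent to the nonvanishing of the multivariate resultant $\mathrm{Res}(f_1,\ldots,f_n)$, which is a polynomial in the coefficients. Then $U=\{\mathrm{Res}\neq0\}$, and $\mathrm{Res}(F_0)=1\neq0$. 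I would present the elimination-theoretic/properness argument and mention the resultant as a concrete description of $U$. The case $n=1$ is trivial: $f_1=ax^{d_1}$ and $U=\{a\neq0\}$.
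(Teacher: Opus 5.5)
Your proof is correct. It uses the same incidence-correspondence idea as the paper but packages it projectively.

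The paper works with the affine set $X=\{(p,F)\in\mathbb{C}^n\times H_n(d_1,\ldots,d_n) : F(p)=0\}$ and proceeds in three steps:
\begin{enumerate}
\item It computes $\dim X=\dim H_n(d_1,\ldots,d_n)$ from the fibres over $\mathbb{C}^n$, moving each nonzero fibre to the one over $e_1$ as in your Step 1.
\item It applies Theorem~\ref{tdf} to the projection onto $H_n(d_1,\ldots,d_n)$ to get a dense open set over which fibres are $0$-dimensional.
\item Only then does it use homogeneity: a nonzero zero $p$ of $F$ would put the whole line $\{(\lambda p,F)\}$ into the fibre.
\end{enumerate}
You divide out that scaling at the start. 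Your incidence variety therefore has dimension $N-1$, and the bad locus is simply the image of something too small to be dense.

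This buys you two things. First, your $X$ is irreducible. The paper's affine $X$ has an extra $N$-dimensional component $\{0\}\times H_n(d_1,\ldots,d_n)$, so its use of the fibre-dimension theorem (usually stated for irreducible varieties) needs a little more care than the paper gives it. Second, via properness of $\mathbb{P}^{n-1}$ you get that the set of good $F$ is itself Zariski open, while the paper only exhibits some dense open subset of it. In exchange, the paper's route stays entirely affine.

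Properness is not actually needed for the lemma as stated. The set $H_n(d_1,\ldots,d_n)\setminus\overline{\pi(X)}$ already works, because $\dim\overline{\pi(X)}\le\dim X=N-1$. So your ``main obstacle'' is not really an obstacle.

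One small slip: the first inequality of Theorem~\ref{tdf} bounds fibre dimensions from below and does not give $\dim\overline{\pi(X)}\le\dim X$. That bound comes from the generic-fibre equality (fibres over a dense subset of the image are nonempty), or simply from the fact that a morphism cannot raise dimension.
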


\begin{proof}
	
	Let us define $$X=\{(p, F)\in\mathbb{C}^n\times H_n(d_1,\ldots,d_n)\mid F(p)=0\},$$ which is an algebraic set, and consider the projections $\pi_1:X\to\mathbb{C}^n$ and $\pi_2:X\to H_n(d_1,\ldots,d_n)$.
	
	Note that all non-zero fibers of $\pi_1$ are isomorphic to $\pi_1^{-1}(e_1)$, where $e_1=(1,0,\ldots,0)$. Indeed, given $p\in\mathbb{C}^n\setminus\{0\}$, extend $\{p_1=p\}$ to a basis $\{p_1,p_2,\ldots,p_n\}$ of the vector space $\mathbb{C}^n$ and consider the linear isomorphism $T:\mathbb{C}^n\to\mathbb{C}^n$ such that $T(p_i)=e_i$, where $\{e_1,e_2,\ldots,e_n\}$ is the canonical basis of $\mathbb{C}^n$. Then, $$\begin{array}{rl}\varphi:&\pi_1^{-1}(e_1)\longrightarrow\pi_1^{-1}(p),\\&(e_1, F)\longmapsto\varphi(e_1, F)=(p, F\circ T)\end{array}$$ is a morphism, since it is given by polynomial expressions of the coefficients of $F$, and its inverse is $$\begin{array}{rl}\psi:&\pi_1^{-1}(p)\longrightarrow\pi_1^{-1}(e_1)\\&(p,F)\longmapsto\psi(p, F)=(e_1, F\circ T^{-1}),\end{array}$$ which is also a morphism for the same reason. Thus, it suffices to consider only the fiber $\pi_1^{-1}(e_1)$.
	
	Using the expression for $F$ given in \ref{exp1}, it follows that the fiber $\pi_1^{-1}(e_1)$ is $$\pi_1^{-1}(e_1)=\{(e_1,F)\in\mathbb{C}^n\times H_n(d_1,\ldots,d_n)\mid a_{0,0,\ldots,0;k}=0,\text{ for }1\leq k\leq n\}.$$ Thus, its dimension is $$\textnormal{dim}(\pi_1^{-1}(e_1))=\textnormal{dim}(H_n(d_1,\ldots,d_n))-n.$$ Now, applying Theorem \ref{tdf} to $\pi_1$, there exists a non-empty dense Zariski open set $V\subseteq\mathbb{C}^n$ such that, for all $p\in V$, we have $$\textnormal{dim}(\pi_1^{-1}(p))=\textnormal{dim}(X)-\textnormal{dim}(\mathbb{C}^n)=\textnormal{dim}(X)-n.$$ Since the affine space $\mathbb{C}^n$ is irreducible, then every two non-empty Zariski open sets have non-empty intersection, so we can take $p\in V\cap(\mathbb{C}^n\setminus\{0\})$ and, from the isomorphism  $\pi_1^{-1}(p)\cong\pi_1^{-1}(e_1)$, we conclude that $$\textnormal{dim}(X)-n=\textnormal{dim}(\pi_1^{-1}(p))=\textnormal{dim}(\pi_1^{-1}(e_1))=\textnormal{dim}(H_n(d_1,\ldots,d_n))-n.$$ In other words, we have that $$\textnormal{dim}(X)-\textnormal{dim}(H_n(d_1,\ldots,d_n))=0.$$ Now, we apply Theorem \ref{tdf} to $\pi_2$ and we obtain a non-empty dense Zariski open set $U\subseteq H_n(d_1,\ldots,d_n)$ such that, for all $F\in U$, we have $$\textnormal{dim}(\pi_2^{-1}(F))=\textnormal{dim}(X)-\textnormal{dim}(H_n(d_1,\ldots,d_n))=0.$$ Finally, let us verify that this set $U$ satisfies our claim. Given $F\in U$, it follows from the homogeneity of $F$ that $\{0\}\subseteq F^{-1}(0)$. Now, if we suppose that there exists a point $p\in F^{-1}(0)\setminus\{0\}$, then $(p,F)\in\pi_2^{-1}(F)$. Once again, it follows from the homogeneity of $F$ that $(\lambda p,F)\in\pi_2^{-1}(F)$, for all $\lambda\in\mathbb{C}$. Thus, the fiber $\pi_2^{-1}(F)$ contains a line, which is a contradiction, since $\textnormal{dim}(\pi_2^{-1}(F))=0$.
	
\end{proof}

\begin{remark}
	
	The above result has a geometric interpretation. For a homogeneous polynomial $f_k$ in $n$ variables, the equation $f_k=0$ defines a hypersurface in $\mathbb{C}^n$ made of lines passing through the origin. The Lemma \ref{lemma1} says that, for sufficiently generic $n$ such hypersurfaces, they only have the origin as a common point.
	
\end{remark}

The condition $F^{-1}(0)=\{0\}$ has an equivalent topological condition. First, let us state and prove another lemma.

\begin{lemma}\label{lemma2} A continuous mapping $F:\mathbb{C}^n\to\mathbb{C}^n$ is proper if, and only if, for every sequence $(x_k)_{k\geq 1}$ of points in $\mathbb{C}^n$ such that $x_k\to\infty$, we have $F(x_k)\to\infty$.
	
\end{lemma}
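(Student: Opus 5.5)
We take \emph{proper} in its usual sense: the preimage of every compact set is compact. We use that, in $\mathbb{C}^n$, a set is compact if and only if it is closed and bounded (Heine--Borel). The phrase ``$x_k\to\infty$'' means $\|x_k\|\to\infty$. We prove the two implications separately, in each case arguing by contradiction and extracting a suitable sequence.

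\emph{Properness implies the sequential condition.} Suppose $F$ is proper and $\|x_k\|\to\infty$, but $F(x_k)\not\to\infty$. Then there exist $R>0$ and a subsequence $(x_{k_j})_{j\geq1}$ with $\|F(x_{k_j})\|\leq R$ for all $j$. The closed ball $K=\overline{B}(0,R)$ is compact, so by properness $F^{-1}(K)$ is compact, hence bounded. Every $x_{k_j}$ lies in $F^{-1}(K)$, yet $\|x_{k_j}\|\to\infty$. This contradiction shows $F(x_k)\to\infty$.

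\emph{The sequential condition implies properness.} Let $K\subseteq\mathbb{C}^n$ be compact. Since $F$ is continuous and $K$ is closed, $F^{-1}(K)$ is closed, so by Heine--Borel it suffices to show $F^{-1}(K)$ is bounded. Suppose it is not. Then we can choose $x_k\in F^{-1}(K)$ with $\|x_k\|\geq k$, so $x_k\to\infty$. By hypothesis $F(x_k)\to\infty$. But every $F(x_k)$ lies in the bounded set $K$, which is a contradiction. Hence $F^{-1}(K)$ is compact and $F$ is proper.

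There is no real obstacle here. The only points needing care are fixing the meaning of ``$\to\infty$'' as divergence of the norm, and in the second implication using continuity to get closedness of $F^{-1}(K)$, so that boundedness alone gives compactness.
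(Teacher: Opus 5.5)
Your proof is correct and follows the paper's argument essentially verbatim: the compact closed ball $\overline{B}(0,R)$ gives the forward direction, and choosing $x_k\in F^{-1}(K)$ with $\|x_k\|\geq k$ gives the converse. In the first direction you pass to a subsequence, which is slightly more careful than the paper. The paper asserts a bound $\|F(x_k)\|\leq R$ for every $k$, but the negation of $F(x_k)\to\infty$ only gives such a bound along a subsequence.
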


\begin{proof}
	
	Suppose that $F$ is proper and let $(x_k)_{k\geq1}$ be a sequence of points in $\mathbb{C}^n$ such that $x_k\to\infty$. For the sake of contradiction, let us suppose that $F(x_k)$ does not diverge to $\infty$. Then, there exists $R>0$ such that $||F(x_k)||\leq R$, for every $k$. Since $F$ is proper, we have that the set $F^{-1}({B[0,R]})$ is compact and, in particular, it is bounded, where $B[0,R]$ denotes the closed ball of radius $R$ centered at the origin, which is compact. However, the bounded set $F^{-1}(B[0,R])$ contains the sequence $(x_k)_{k\geq1}$, that is not bounded, and this is a contradiction. Therefore, $F(x_k)\to\infty$.
	
	Suppose that $F$ maps unbounded sequences to unbounded sequences and let us prove that $F$ is proper. Let $K$ be a compact set in $\mathbb{C}^n$. Since $K$ is closed and $F$ is continuous, we already have that $F^{-1}(K)$ is closed. Now, we only need to prove that $F^{-1}(K)$ is bounded. For the sake of contradiction, let us suppose that $F^{-1}(K)$ is not bounded. Then, for each $k\geq1$, we can find $x_k\in F^{-1}(K)$ such that $||x_k||\geq k$. Then, $x_k\to\infty$ and, by hypothesis, $F(x_k)\to\infty$, which is a contradiction, since $(F(x_k))_{k\geq1}$ is a sequence of points in the bounded set $K$.
	
\end{proof}

\begin{proposition}\label{prop1}
	
	Let $F\in H_n(d_1,\ldots,d_n)$. Then, $F:\mathbb{C}^n\to\mathbb{C}^n$ is proper (with respect to the usual topology in $\mathbb{C}^n$) if, and only if, $F^{-1}(0)=\{0\}$.
	
\end{proposition}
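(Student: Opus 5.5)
We prove the two implications separately, using Lemma \ref{lemma2} as the working characterization of properness. Throughout, $F=(f_1,\ldots,f_n)\in H_n(d_1,\ldots,d_n)$, so each $f_i$ is homogeneous of degree $d_i\geq1$ and satisfies $f_i(\lambda x)=\lambda^{d_i}f_i(x)$ for all $\lambda\in\mathbb{C}$.

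For the direction ``proper $\Rightarrow F^{-1}(0)=\{0\}$'' we argue by contraposition. Suppose there is $p\neq0$ with $F(p)=0$. By homogeneity, $F(\lambda p)=0$ for every $\lambda\in\mathbb{C}$. Take $x_k=kp$. Then $x_k\to\infty$, yet $F(x_k)=0$ for all $k$. By Lemma \ref{lemma2}, $F$ is not proper. Equivalently, $F^{-1}(\{0\})$ contains a complex line and so is not compact.

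For the converse, assume $F^{-1}(0)=\{0\}$. Let $S=\{x\in\mathbb{C}^n : \|x\|=1\}$ be the unit sphere, which is compact. The function $x\mapsto \max_i |f_i(x)|$ is continuous on $S$ and, by hypothesis, never vanishes there. Hence it attains a positive minimum $c>0$ on $S$.

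Now take any $x\neq0$ and write $x=r u$ with $r=\|x\|$ and $u\in S$. Choose an index $i$ with $|f_i(u)|\geq c$. Then
$$\|F(x)\|\geq |f_i(x)|=r^{d_i}|f_i(u)|\geq c\,r^{d_i}.$$
Set $d=\min_i d_i\geq1$. For $r\geq1$ we have $r^{d_i}\geq r^d$, so we obtain the uniform bound $\|F(x)\|\geq c\|x\|^{d}$ whenever $\|x\|\geq1$.

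Consequently, if $x_k\to\infty$, then $\|F(x_k)\|\geq c\|x_k\|^d\to\infty$, and Lemma \ref{lemma2} shows that $F$ is proper.

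The only point requiring care is this last estimate. The degrees $d_i$ may differ, so we cannot bound $\|F(x)\|$ below by a single homogeneous expression in $x$. Restricting to $\|x\|\geq1$ and using the smallest degree $d$ handles this, since each factor $r^{d_i}$ dominates $r^d$ in that range.
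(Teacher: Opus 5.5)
Your proof is correct and takes essentially the same approach as the paper. Homogeneity puts a whole complex line inside $F^{-1}(0)$ for the forward direction. For the converse, a positive lower bound on the unit sphere, combined with $\|x\|^{d_i}\geq\|x\|^{d}$ for $\|x\|\geq1$, yields $\|F(x)\|\geq c\|x\|^{d}$, and then Lemma~\ref{lemma2} finishes; your use of the sequence $kp$ instead of compactness of the fiber, and of $\max_i|f_i|$ instead of $\|F\|$ on the sphere, are only cosmetic differences.
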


\begin{proof}
	
	Suppose that $F$ is proper. Since $F$ is homogeneous, we already have that $\{0\}\subseteq F^{-1}(0)\subseteq\{0\}$. For the sake of contradiction, let us suppose that there exists a point $p\in F^{-1}(0)\setminus\{0\}$. Since $F$ is proper, then the fiber $F^{-1}(0)$ is compact and, in particular, it is bounded. It follows from the homogeneity of $F$ that $\lambda p\in F^{-1}(0)$, for every $\lambda\in\mathbb{C}$. Thus, $F^{-1}(0)$ contains a line, which is contradiction, since $F^{-1}(0)$ is bounded. Therefore, $F^{-1}(0)=\{0\}$.
	
	Conversely, suppose that $F^{-1}(0)=\{0\}$. To prove that $F$ is proper, we will apply Lemma \ref{lemma2}. Let $(x_k)_{k\geq1}$ be a sequence of points in $\mathbb{C}^n$ such that $x_k\to\infty$. Note that $||F(S^{2n-1})||$ is a closed set in $\mathbb{R}$, where $S^{2n-1}$ is the unit sphere in $\mathbb{C}^n$, and $0\notin||F(S^{2n-1})||$, by hypothesis. Since $\mathbb{R}$ is a regular space, there exists $\alpha>0$ such that $$||F(x)||>\alpha>0,\forall x\in S^{2n-1}.$$ Since $x_k\to\infty$, then, by discarding some initial terms, we can suppose that $||x_k||\geq1$, for all $k\geq1$. Then, putting $y_k=\frac{x_k}{||x_k||}\in S^{2n-1}$ and $d=\min\{d_1,\ldots,d_n\}\geq1$, we get\begin{align*}
		||F(x_k)||&=\left|\left|\left(||x_k||^{d_1}f_1(y_k),\ldots,||x_k||^{d_n}f_n(y_k)\right)\right|\right|\\&=||x_k||^{d}\left|\left|\left(||x_k||^{d_1-d}f_1(y_k),\ldots,||x_k||^{d_n-d}f_n(y_k)\right)\right|\right|\\&\geq ||x_k||^d||F(y_k)||\\&\geq||x_k||^d\alpha.
	\end{align*} Since $x_k\geq\infty$, we conclude that $F(x_k)\to\infty$ and that $F$ is proper.
	
\end{proof}

By combining Lemma \ref{lemma1} with Proposition \ref{prop1}, we immediately get the following result:

\begin{corollary}\label{coro1}
	
	Let $n\geq1$ and $d_1,\ldots,d_n\geq1$ be given integers. Then, there exists a non-empty dense Zariski open set $U\subseteq H_n(d_1,\ldots,d_n)$ such that, for every $F\in U$, the mapping $F:\mathbb{C}^n\to\mathbb{C}^n$ is proper with respect to the usual topology in $\mathbb{C}^n$.
	
\end{corollary}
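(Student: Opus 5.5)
The plan is to take the open set $U$ produced by Lemma \ref{lemma1} and verify, with Proposition \ref{prop1}, that every $F\in U$ is proper. Lemma \ref{lemma1} gives a non-empty dense Zariski open set $U\subseteq H_n(d_1,\ldots,d_n)$ such that $F^{-1}(0)=\{0\}$ for every $F\in U$. The same $U$ should serve for the present statement, so no new open set needs to be constructed.

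Given $F\in U$, the condition $F^{-1}(0)=\{0\}$ holds. Since $F\in H_n(d_1,\ldots,d_n)$, Proposition \ref{prop1} applies and tells us that $F$ is proper with respect to the usual topology of $\mathbb{C}^n$. This is all the statement asks for.

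I do not expect a real obstacle, since the corollary is essentially a chaining of the two earlier results. The only points to watch are that Proposition \ref{prop1} uses homogeneity, which elements of $H_n$ have by definition, and that the usual topology is the one relevant there. I would also note in passing a small gap in the proof of Proposition \ref{prop1}. It asserts that $\|F(S^{2n-1})\|$ is closed and misses $0$, so there is a positive lower bound $\alpha$. This is justified by compactness of the sphere and continuity of $\|F\|$, which gives a positive minimum. Since that proposition is stated earlier in the paper, I may take it as given here.
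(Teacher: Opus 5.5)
Your proposal is correct and is exactly the paper's argument: the paper obtains the corollary immediately by taking the open set $U$ from Lemma \ref{lemma1} and applying Proposition \ref{prop1} to each $F\in U$. Your side remark on Proposition \ref{prop1} is also right: compactness of $S^{2n-1}$ and continuity of $\|F\|$ give the positive lower bound $\alpha$ directly.
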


The condition for a complex polynomial mapping to be proper is very strong, since for such a mapping, the set $F^{-1}(p)$ must be compact, but it is also an algebraic set. Below, we show that the only algebraic sets that are compact in the usual topology are the ones that consist of finitely many point.

\begin{proposition}\label{prop2}
	
	An algebraic set $X\subseteq\mathbb{C}^n$ is compact if, and only if, $X$ is a finite set $\{p_1,\ldots,p_m\}$.
	
\end{proposition}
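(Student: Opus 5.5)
The reverse implication is immediate: a finite subset of $\mathbb{C}^n$ is closed and bounded, hence compact by Heine--Borel. All the work is in the forward direction: if an algebraic set $X\subseteq\mathbb{C}^n$ is compact in the usual topology, then $X$ is finite. I will reduce this to the one-variable case by looking at the images of $X$ under the coordinate projections.

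For each $i=1,\ldots,n$ let $\pi_i:\mathbb{C}^n\to\mathbb{C}$ be the $i$-th coordinate projection. The key claim is that $\pi_i(X)$ is finite for every $i$. Then $X\subseteq \pi_1(X)\times\cdots\times\pi_n(X)$ is contained in a finite set, and we are done.

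To prove the claim, I would use the fact that $\pi_i(X)$ is a constructible subset of $\mathbb{C}$ by Chevalley's theorem. In $\mathbb{C}$ the constructible sets are exactly the finite sets and the cofinite sets. A cofinite subset of $\mathbb{C}$ is unbounded. On the other hand, $\pi_i(X)$ is the continuous image of a compact set, so it is compact and in particular bounded. Hence $\pi_i(X)$ is finite.

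If I want to avoid Chevalley, I would argue by irreducible components instead, which is the step I expect to require the most care. Assume $X$ is irreducible of dimension $\geq1$. Then some coordinate function, say $x_i$, is non-constant on $X$. The restriction $x_i|_X:X\to\mathbb{C}$ is then a non-constant regular function on an irreducible variety. Its image is irreducible, so its closure is all of $\mathbb{C}$. The image is also constructible, so it contains a nonempty Zariski open subset of $\mathbb{C}$, i.e.\ it is cofinite. This again contradicts boundedness of the compact set $\pi_i(X)$. So every irreducible component of $X$ has dimension $0$, hence is a point. Since $X$ has finitely many irreducible components, $X$ is finite.

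The only nontrivial input in either version is the constructibility of images of morphisms (or, equivalently, the dominant-image argument via irreducibility). Everything else is elementary topology.
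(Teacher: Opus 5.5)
Your proof is correct, but it takes a different route from the paper.

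The paper decomposes $X$ into irreducible components $Y_i$. It then applies the geometric form of Noether normalization to get a surjective morphism $Y_i\to\mathbb{C}^{\dim Y_i}$, and concludes from compactness of the image that $\dim Y_i=0$, so each component is a point.

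You instead project to each coordinate and use Chevalley's theorem to reduce to the target $\mathbb{C}$. There constructible sets are finite or cofinite, and boundedness forces finiteness. The inclusion $X\subseteq\pi_1(X)\times\cdots\times\pi_n(X)$ then finishes the proof.

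Your first version is economical. It needs no irreducible decomposition and no dimension theory; you never use that an irreducible set of dimension $0$ is a point. Beyond Chevalley, you only need the trivial description of constructible subsets of a line. The price is that Chevalley's theorem is a heavier input than Noether normalization, since it is typically proved using Noether normalization or generic freeness.

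Your second version does not actually avoid Chevalley. The sentence ``the image is also constructible'' invokes Chevalley again, or at least its special case that a dominant morphism has image containing a dense open set. Your closing remark essentially concedes this. If you want a genuinely Chevalley-free argument, the paper's route is the natural one: a finite surjection onto $\mathbb{C}^d$ with $d\ge 1$ immediately has unbounded image.
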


\begin{proof}
	
	Suppose that $X$ is compact and let $X=\cup_{i=1}^mY$ be its decomposition in irreducible components. Since $\mathbb{C}$ is an algebraically closed field, we can apply the Geometric Form of Noether's Normalization Lemma in \cite[Geometric form of Noether's normalization lemma, p. 42]{3} for each $i=1,\ldots,m$ to find a surjective morphism $\pi_i:Y_i\to\mathbb{C}^{\textnormal{dim}(Y_i)}$. Since $X$ is compact and $Y_i$ is closed, then $Y_i$ is also compact, and it follows from the continuity of $\pi_i$ with respect to the usual topology, since it is given by polynomial expressions, that $\pi_i(Y_i)=\mathbb{C}^{\textnormal{dim}(Y_i)}$ is compact, and this is only possible if $\textnormal{dim}(Y_i)=0$. We conclude that $Y_i=\{p_i\}$ is a single point and $X=\{p_1,\ldots,p_m\}$ is a finite set.
	
	For the converse, given a covering $X\subseteq\cup_{i\in I}U_i$ of open sets, we have that, for each $j=1,\ldots,m$, there exists $i_j\in I$ such that $p_j\in U_{i_j}$. Thus, $X\subseteq\cup_{j=1}^mU_{i_j}$. Therefore, $X$ is compact.
	
\end{proof}

\begin{remark}
	
	From Lemma \ref{lemma1}, we have $F^{-1}(0)=\{0\}$, for generic $F\in H_n(d_1,\ldots,d_n)$. By Proposition \ref{prop1}, the mapping $F$ is proper. So, every fiber $F^{-1}(p)$ is a compact set in $\mathbb{C}^n$. It follows from Proposition \ref{prop2} that every fiber $F^{-1}(p)$ is a finite set.
	
	It is important to note that this implication does not work in the real case. For example, consider $F:\mathbb{R}^2\to\mathbb{R}^2$ given by $f(x,y)=(x^2+y^2, x^2+y^2)$. It is not difficult to check that $F^{-1}(0)=\{0\}$ and that $F$ is a proper mapping. However, given a real number $r>0$, the fiber $F^{-1}(r^2,r^2)$ is a circle of radius $r$ centered at the origin, which is not a finite set. 
	
\end{remark}

We now extend the result stated in Corollary \ref{coro1} to the space of polynomial mappings with bounded degrees $\Omega_n(d_1,\ldots,d_n)$.

\begin{lemma}\label{lemma3}
	
	Let $n\geq1$ and $d_1,\ldots,d_n\geq1$ be given integers. Then, there exists a non-empty dense Zariski open set $U\subseteq\Omega_n(d_1,\ldots,d_n)$ such that, for every $F\in U$, the mapping $F:\mathbb{C}^n\to\mathbb{C}^n$ is proper with respect to the usual topology in $\mathbb{C}^n$.
	
\end{lemma}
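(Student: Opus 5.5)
The plan is to reduce to the homogeneous case through the map that takes leading forms. For $F=(f_1,\ldots,f_n)\in\Omega_n(d_1,\ldots,d_n)$, let $\tilde f_i$ be the homogeneous component of $f_i$ of degree exactly $d_i$; it may be zero. This defines
\[
\Phi:\Omega_n(d_1,\ldots,d_n)\to H_n(d_1,\ldots,d_n),\qquad \Phi(F)=(\tilde f_1,\ldots,\tilde f_n).
\]
In coordinates, $\Phi$ is the linear projection onto the coefficients of the top-degree monomials. So it is a surjective morphism, and in fact a trivial product projection $\Omega_n\cong H_n\times\mathbb{C}^M$.

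Let $U_0\subseteq H_n(d_1,\ldots,d_n)$ be the non-empty dense Zariski open set from Lemma \ref{lemma1}, on which $\tilde F^{-1}(0)=\{0\}$. I will take
\[
U=\Phi^{-1}(U_0).
\]
This set is Zariski open because $\Phi$ is continuous. It is non-empty because $\Phi$ is surjective. It is dense because, for a projection $H_n\times\mathbb{C}^M\to H_n$, the preimage $U_0\times\mathbb{C}^M$ of a non-empty open set is non-empty open in an irreducible affine space, hence dense.

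The main step is to show that every $F\in U$ is proper, using Lemma \ref{lemma2}. Write $F=\tilde F+G$ with $\tilde F=\Phi(F)$. Each component $g_i$ of $G$ has degree at most $d_i-1$, so there is a constant $C$ with
\[
|g_i(x)|\le C\,\|x\|^{d_i-1}\quad\text{for }\|x\|\ge1.
\]
As in the proof of Proposition \ref{prop1}, compactness of the unit sphere together with $\tilde F^{-1}(0)=\{0\}$ gives $\alpha>0$ with $\|\tilde F(y)\|>\alpha$ for all $\|y\|=1$.

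This lower bound does not directly control $\|\tilde F(x)\|$ when the degrees $d_i$ differ, so I would use a weighted normalization. For $\|x\|=r\ge1$ and $y=x/r$, we have $\tilde f_i(x)=r^{d_i}\tilde f_i(y)$. Since $\|\tilde F(y)\|>\alpha$, some index $i$ satisfies $|\tilde f_i(y)|>\alpha/\sqrt n$. For that index,
\[
|f_i(x)|\ \ge\ r^{d_i}\frac{\alpha}{\sqrt n}-C\,r^{d_i-1}\ =\ r^{d_i-1}\left(\frac{\alpha r}{\sqrt n}-C\right),
\]
which tends to $\infty$ as $r\to\infty$, uniformly in the choice of $i$. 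Hence $\|F(x_k)\|\ge|f_i(x_k)|\to\infty$ whenever $x_k\to\infty$. The index $i$ may depend on $k$, but there are only finitely many indices and the estimate is uniform, so this causes no problem. Lemma \ref{lemma2} then gives that $F$ is proper.

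The step I expect to need the most care is this uniform domination of the lower-order terms when the $d_i$ are unequal. The componentwise estimate above handles it, and the case $d_i=1$ also works because then $g_i$ is a constant.
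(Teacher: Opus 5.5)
Your proposal is correct and follows the same route as the paper. Both pull back the generic set of Lemma \ref{lemma1} (equivalently Corollary \ref{coro1}) along the leading-form projection, get openness and density from the product structure, and conclude properness via Lemma \ref{lemma2}. The paper merely asserts that $F(x_k)\to\infty$ because $\pi(F)$ is proper, whereas you actually prove it: your componentwise bound $|f_i(x)|\ge r^{d_i-1}(\alpha r/\sqrt{n}-C)$ correctly handles unequal degrees, where comparing $\|\tilde F(x)\|\ge\alpha r^{\min_j d_j}$ against $\|G(x)\|\le C' r^{\max_j d_j-1}$ would not suffice.
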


\begin{proof}
	
	Let $\pi:\Omega_n(d_1,\ldots,d_n)\to H_n(d_1,\ldots,d_n)$ be the projection of the homogeneous part of each $f_i$ with degree $d_i$, which is a morphism. By Corollary \ref{coro1}, there exists a non-empty dense Zariski open set $V\subseteq H_n(d_1,\ldots,d_n)$ such that, for every $F\in V$, the mapping $F:\mathbb{C}^n\to\mathbb{C}^n$ is proper with respect to the usual topology.
	
	Consider $U=\pi^{-1}(V)$, which is a non-empty Zariski open set in $\Omega_n(d_1,\ldots,d_n)$ because $\pi$ is continuous. Since $\pi$ is also open and surjective, it follows that $$\overline{U}=\overline{\pi^{-1}(V)}=\pi^{-1}\left(\overline{V}\right)=\pi^{-1}(H_n(d_1,\ldots,d_n))=\Omega_n(d_1,\ldots,d_n).$$ Thus, $U$ is dense in $\Omega_n(d_1,\ldots,d_n)$. Furthermore, given $F\in U$ and a sequence $(x_k)_{k\geq1}$ in $\mathbb{C}^n$, with $x_k\to\infty$ in $\mathbb{C}^n$, we have that $F(x_k)\to\infty$, because the homogeneous part of degrees $(d_1,\ldots,d_n)$ of $F$ satisfies this condition, by Lemma \ref{lemma2}, since $\pi(F)\in V$ is proper. It follows again from Lemma \ref{lemma2} that $F$ is proper. Therefore, $U$ is the desired set.
	
\end{proof}

In \cite[Theorem 2.1, p. 294]{4}, the authors prove that, if the Jacobian determinant of $F$ is invertible, then $F$ being invertible is equivalent to $F$ being a proper map. Combining this equivalence with Lemma \ref{lemma3}, we conclude our main result:

\begin{theorem}\label{teo1}
	
	Let $n\geq1$ and $d_1,\ldots,d_n\geq1$ be given integers. Then, there exists a non-empty dense Zariski open set $U\subseteq \Omega_n(d_1,\ldots,d_n)$ such that, for every $F\in U$, if the Jacobian determinant of $F$ is invertible, then $F$ is an automorphism.
	
\end{theorem}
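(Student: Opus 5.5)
The plan is to combine Lemma \ref{lemma3} with the properness criterion of Bass, Connell and Wright quoted just before the statement (\cite[Theorem 2.1, p. 294]{4}). I take $U\subseteq\Omega_n(d_1,\ldots,d_n)$ to be exactly the non-empty dense Zariski open set given by Lemma \ref{lemma3}. Every $F\in U$ is then proper in the usual topology of $\mathbb{C}^n$. If, in addition, the Jacobian determinant of $F$ is nowhere zero, the cited theorem says that for such $F$ invertibility (with polynomial inverse) is equivalent to properness. So $F$ is an automorphism, and the statement holds with this $U$.

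The only step I would re-examine is the passage in Lemma \ref{lemma3} from the top homogeneous part $\pi(F)$ to $F$ itself, because the lower-order terms have to be controlled explicitly. Write $F=\pi(F)+G$ with $\pi(F)=(h_1,\ldots,h_n)\in V$ and $\deg g_i\le d_i-1$. By Lemma \ref{lemma1} and Proposition \ref{prop1}, $\pi(F)^{-1}(0)=\{0\}$. Compactness of $S^{2n-1}$ then gives $\alpha>0$ with $\max_i|h_i(y)|\ge\alpha/\sqrt n$ for all $y\in S^{2n-1}$. Now take $x$ with $\|x\|\ge1$, set $y=x/\|x\|$, and choose an index $i$ with $|h_i(y)|\ge\alpha/\sqrt n$. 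Then
$$|f_i(x)|\ \ge\ \|x\|^{d_i}\frac{\alpha}{\sqrt n}-C\|x\|^{d_i-1},$$
where $C$ depends only on the coefficients of $G$. The right-hand side is at least $\min_j\bigl(\|x\|^{d_j}\alpha/\sqrt n-C\|x\|^{d_j-1}\bigr)$, which tends to $\infty$ as $\|x\|\to\infty$ because every $d_j\ge1$. Hence $\|F(x_k)\|\to\infty$ whenever $x_k\to\infty$, and Lemma \ref{lemma2} gives properness of $F$. I expect this estimate to be the main point needing care, since the degrees $d_i$ differ and one cannot simply factor out a single power of $\|x\|$.

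For density of $U$, I would use that $\pi$ is a linear surjection $\mathbb{C}^N\to\mathbb{C}^M$. Hence $\pi^{-1}(V)\cong V\times\mathbb{C}^{N-M}$ is a non-empty open subset of an irreducible affine space, and so it is automatically dense.

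Finally, I would remark that the hypothesis ``Jacobian determinant nowhere zero'' is genuinely restrictive inside $U$. If some $d_i\ge2$, then for generic $F$ the top-degree part of $\det JF$ is nonzero, so the implication is often vacuous. It remains logically valid, however, and no further argument is needed.
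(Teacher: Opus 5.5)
Your proposal is correct and follows the paper's proof exactly: you take $U$ from Lemma \ref{lemma3} and invoke the Bass--Connell--Wright equivalence of properness and invertibility for maps whose Jacobian determinant is nowhere zero. Your explicit estimate controlling the lower-order terms $G$ is a useful addition, because the paper's proof of Lemma \ref{lemma3} only asserts that properness of $\pi(F)$ passes to $F$ and never proves it.
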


In other words, our main result states that, whether or not the Jacobian conjecture is true, it holds for sufficiently generic polynomial mappings. The problem is determining if that non-empty dense Zariski open set can be expanded further to be the entire space of polynomial mappings.

\section{The Levent Alpöge's Example}

Recently, Levent Alpöge published a counter-example for the Jacobian conjecture in the case $n=3$. The counter-example is $F=(f_1,f_2,f_3):\mathbb{C}^3\to\mathbb{C}^3$, where \begin{align*}
	f_1&=(1+xy)^3z + y^2(1+xy)(4+3xy),\\
	f_2&=y + 3x(1+xy)^2z + 3xy^2(4+3xy),\\
	f_3&=2x - 3x^2y - x^3z.
\end{align*}

By direct computation, it is not difficult to see that the Jacobian determinant of $F$ is constant equal to $-2$. However, $F$ is not injective, since $$\left(0,0,-\frac{1}{4}\right), \left(1,-\frac{3}{2},\frac{13}{2}\right), \left(-1,\frac{3}{2}, \frac{13}{2}\right)\in F^{-1}\left(-\frac{1}{4},0,0\right).$$ Therefore, for $n\geq3$, it is possible to construct a counter-example by considering the lifting $G:\mathbb{C}^n\to\mathbb{C}^n$ given by $$G(x,y,z,w_4,\ldots,w_n)=(f_1(x,y,z), f_2(x,y,z), f_3(x,y,z), w_4,\ldots,w_n).$$ However, the conjecture is still an open problem for $n=2$.

Let us analyze this example with a geometric point of view. Take $d_1=\textnormal{deg}(f_1)=7$, $d_2=\textnormal{deg}(f_2)=6$, $d_3=\textnormal{deg}(f_3)=5$ and consider the homogeneous part of $F$ of degree $(d_1,d_2,d_3)$ $$\pi(F)=(x^3y^3z, 3x^3y^2z, -x^3z).$$ By taking the primary decomposition of the ideal $I$ generated by the coordinate functions of $\pi(F)$, we have that $$\langle  x^3y^3z, 3x^3y^2z, -x^3z\rangle=\langle x^3\rangle\cap\langle z\rangle.$$ Therefore, the fiber $\pi(F)^{-1}(0)$ is an algebraic set of dimension $2$ and $\pi(F)$ is not a proper mapping. It shows that $\pi(F)$ is not in the non-empty dense Zariski open set $U$ obtained in Corollary \ref{coro1}.

\section{Conclusion}

In this paper, we show that, even when the Jacobian conjecture is not true in general, it holds for sufficiently generic polynomials. In the case $n=2$, where the question is still an open problem, we can say that, if there exists a counter-example, it must be rare, that is, the coefficients of this polynomial mapping must not all vary in a Zariski open set.

\backmatter

\bmhead{Acknowledgments}

The author wishes to extend his heartfelt appreciation to FAPESP (S\~{a}o Paulo Research Foundation) for their invaluable partial support, process numbers 2020/14442-9 and 2023/03086-5, which played an important role in the development of this scientific research.

\bibliography{sn-bibliography}

\end{document}